\theoremstyle{plain}
\newtheorem{thm}[equation]{Theorem}
\newtheorem{prop}[equation]{Proposition}
\newtheorem{lemma}[equation]{Lemma}
\numberwithin{equation}{section}
\newcommand{\Z}{\mathbb Z}
\newcommand{\Aut}{{\mathrm{Aut}}}
\newcommand{\Hom}{{\mathrm{Hom}}}
\newcommand{\rO}{{\mathrm{O}}}
\newcommand{\Sp}{{\mathrm{Sp}}}
\newcommand{\SL}{{\mathrm{SL}}}
\newcommand{\GL}{{\mathrm{GL}}}
\newcommand{\PGL}{{\mathrm{PGL}}}
\newcommand{\SO}{{\mathrm{SO}}}
\def\Z{{\mathbb Z}}
\title {Howe duality for the dual pair $\SL_2(\mathbb R) \times F_{4,1}$ \\ a ping-pong of $K$-types}
\author{Gordan Savin} 
\address{Department of Mathematics, University of Utah, Salt Lake City, UT}
\email{gordan.savin@utah.edu}
\begin{document}

\begin{abstract} 
We prove Howe duality for an exceptional theta correspondence. To that end, 
 we exploit a pair of see-saw identities and relate the $K$-types of corresponding representations.  
\end{abstract} 

\subjclass{22E46, 22E47}
\keywords{minimal representation, theta correspondences} 

\footnote {This work is  supported by the Croatian Science Foundation under the project IP-2022-10-4615 and by a gift No. 946504 from the Simons Foundation.}

\maketitle

\section{Introduction}

\vskip 5pt 
Let $\mathbb O$ be the algebra of Cayley octonions over the field of real numbers $\mathbb R$. 
 Let $J$ be the $27$-dimensional space of $3\times 3$ hermitian symmetric matrices with coefficients in $\mathbb O$.  
 Let $N_J : J \rightarrow \mathbb R$ be the cubic form (the norm of $J$), essentially the determinant of $3\times 3$ matrices. 
For every $e\in J$ such that $N_J(e)\neq 0$ there is a structure of exceptional Jordan algebra on $J$ such that $e$ is the identity of $J$. 
Let $G=\Aut(J,e)$ be the group of automorphisms of the resulting Jordan algebra which is the same as the group of linear transformations of $J$ preserving 
$N_J$ and the point $e$.  It is a simple Lie  group of absolute type $F_4$. See \cite{KMRT} for all of this. 
If we pick $e$ to be 
\[ 
\left(\begin{array}{ccc} 
+1 & & \\
 & +1 &  \\
 & & +1 \\
\end{array} 
\right)
\text{ and } 
\left(\begin{array}{ccc} 
-1 & & \\
 & -1 &  \\
 & & +1 \\
\end{array} 
\right). 
\] 
then $G$ is compact and of split rank one, respectively,  for the two choices of $e$ \cite{OV}.   
The Jordan algebra, by way of Koecher-Tits construction \cite{K67}, gives rise to a simply connected group $G(J)$, of the exceptional type $E_7$ and split rank 3 over $\mathbb R$. 
(The same group for both choices of $e$).  The group $G(J)$ comes along with the dual pair (see \cite{KS15}) 
\[ 
\SL_2(\mathbb R) \times G \subset G(J). 
\] 
These dual pairs are completely analogous to $\SL_2(\mathbb R) \times \rO(p,q)$ in $\Sp_{2n}(\mathbb R)$ where $n=p+q$.  Indeed, if we take $J$ to be 
the space of $n\times n$ symmetric matrices with coefficients in $\mathbb R$, then orthogonal groups are stabilizers of generic points in $J$, and 
 $G(J)$ is $\Sp_{2n}(\mathbb R)$.  

\medskip 

The group $G(J)$ has a minimal (holomorphic) representation that appears as a local component of a global representation \cite{Kim}. In \cite{GS}, the 
representation $\Pi$ was restricted to the dual pair $\SL_2(\mathbb R) \times G$, with $G$ compact, and the following decomposiion was obtained: 
\[ 
\Pi= \bigoplus_{n\geq 0} \delta(2n+12) \otimes E_n. 
\] 
Here $\delta(2n+12)$ is the holomorphic representation of the lowest weight $2n+12$ and $E_n$ is the irreducible representation of $G$ of the highest weight 
$n\varpi_4$ where $\varpi_4$ is the fourth fundamental weight for $F_4$. It is the highest weight of the 26-dimensional irreducible representation of $G$ (the complement 
of the line through $e$ in $J$).  

The goal of this short paper is to study the restriction of $\Pi$ to the dual pair with $G$ non-compact. Let $K$ be the maximal compact subgroup of $G$.  
We emphasize that we do not work with continuous representations of 
non compact groups, but with the corresponding $(\mathfrak g, K)$-modules, where $\mathfrak g$ is the complex Lie algebra of $G$. 
 Thus, if $\pi$ is a $(\mathfrak g, K)$-module of finite length, we define 
\[ 
\Theta(\pi)= (\Pi \otimes \pi^{\vee})_{\mathfrak g}. 
\] 
Here $\pi^{\vee}$ is the contragredient of $\pi$, and the subscript $\mathfrak g$ is saying that we are taking co-invaraints with respect to the action of $\mathfrak g$ on 
the tensor product. We can analogously define $\Theta(\sigma)$ for an $(\mathfrak{sl}_2,\SO(2))$-module $\sigma$ of finite length.  Observe that 
$\Theta(\pi)$ and $\Theta(\sigma)$ are  naturally $(\mathfrak{sl}_2,\SO(2))$ and $(\mathfrak g, K)$-modules, respectively.  In this paper we shall show that 
$\Theta(\pi)$ and $\Theta(\sigma)$ are finite length modules, and that they have unique irreducible quotients, if non-zero.  
 The main input is the structure of lifts of types. More precisely, if $\tau$ is a $K$-type, then $\Theta(\tau)$ is also 
 an $(\mathfrak{sl}_2,\SO(2))$-module that we determine explicitly. Similarly, we have a description of the lift for the $\SO(2)$-types.  
This is done in the last section following an idea of Howe \cite{Ho}. 
With these two pieces of information in hand we can play a game of ping pong with types: if $\sigma \otimes \pi$ is a quotient of $\Pi$ and $\tau$ is a type of $\pi$ then, by a 
see-saw identity,  $\sigma$ must have 
an $\SO(2)$-type determined by $\tau$ and vice versa. More details in the next section where main results are obtained.  
A similar strategy (and the name ping-pong) was used  in \cite{GS} to establish Howe duality for exceptional $p$-adic dual pairs. 

\section{Main results} 
The correspondence with compact $G$ establishes a correspondence of infinitesimal characters in the non-compact case. The reader can consult \cite{Li} for more details on this subject. 
Let us write down the correspondence. Using the standard 
realization of the $F_4$ root system, the infinitesimal character of $E_n$ (the representation with the highest weight $n\varpi_4$) is 
\[ 
\frac{1}{2}(2n+11, 5,3,1).  
\] 
On the other hand, the infinitesimal character of $\delta(2n+12)$ is $2n+11$ which we recognize as the first entry above. 
This means, if $\sigma$ has infinitesimal character $x$, then $\Theta(\sigma)$ has infinitesimal character $\frac{1}{2}( x, 5,3,1)$.  More generally, if $\sigma$ is annihilated 
by an ideal in the center of $U(\mathfrak{sl}_2)$ of finite co-dimension, then $\Theta(\pi)$ is also annihilated by an ideal in the center of $U(\mathfrak g)$ of finite co-dimension. 
Hence,  for $\sigma$ of finite length, in order to prove that $\Theta(\sigma)$ has finite length, it suffices to prove that it is admissible. The same goes for $\Theta(\pi)$.

\smallskip

The maximal compact subgroup of $\SL_2(\mathbb R)$ is $\SO(2)$. Its irreducible representations are one-dimensional characters 
parameterized with integers $n$. Let $(n)$ denote the corresponding one-dimensional representation. 
Since the center of $\SL_2(\mathbb R)$ is also the center of 
the simply connected $G(J)$, only even $n=2m$ characters appear in $\Pi$.  

\smallskip 
 
 The maximal compact subgroup of $G$ is denoted by $K$. It is a simple group of 
 type $B_4$. The group $K$ can be picked to be the intersection of $G$ with the compact form of $G$,  
 where the two groups are the stabilizers of  the two choices for $e$, as in the introduction. 
  Let  $\mathfrak g$ be the complex simple Lie algebra of $G$, and 
 $\mathfrak g=\mathfrak k \oplus \mathfrak p$ the  corresponding Cartan decomposition. 
  Here  $\mathfrak p$ is the 16-dimensional spin representation of $K$.  
  For definiteness, assume that the highest weight of $\mathfrak p$ is 
\[ 
\mu=\frac{1}{2}(1,1,1,1). 
\] 
Let $\lambda=(1,0,0,0)$ be the highest weight of the standard 9-dimensional irreducible representation of $K$. 
Let $\tau(m,n)$ be the irreducible representation of $K$ of the highest weight $m\lambda + n\mu$. Only these representations 
of $K$ appear in the restriction of $\Pi$.  This can be seen by applying the branching rule \cite{Lep} to the representations $E_n$.  
 Let $\Theta(\tau(m,n))$ be the lift of $\tau(m,n)$. 
By Proposition \ref{P:1}, we have 
\[  
\Theta(\tau(m,n))\cong U(\mathfrak{sl}_2) \otimes_{U(\mathfrak{so}(2))} \otimes ( 2m+4). 
\] 
A power of this identity is demonstrated by the following lemma:  

\begin{lemma} \label{L:1} 
 Let $\sigma$ be a finite length $(\mathfrak{sl}_2,\SO(2))$-module. Then 
\[ 
\Hom_K(\Theta(\sigma), \tau(m,n)) \cong \Hom_{\mathfrak{sl}_2} (\Theta(\tau(m,n)), \sigma) \cong \Hom_{\SO(2)}( (2m+4) , \sigma). 
\] 
\end{lemma}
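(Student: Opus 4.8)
The plan is to prove the two isomorphisms from left to right, treating the middle term as a bridge. The first isomorphism, $\Hom_K(\Theta(\sigma),\tau(m,n)) \cong \Hom_{\mathfrak{sl}_2}(\Theta(\tau(m,n)),\sigma)$, should follow from a see-saw/adjunction argument. Writing $\Theta(\sigma) = (\Pi\otimes\sigma^\vee)_{\mathfrak{sl}_2}$ as a $(\mathfrak g,K)$-module, a $K$-homomorphism into $\tau(m,n)$ corresponds to a $K$-invariant vector in $\Theta(\sigma)^\vee \otimes \tau(m,n)$, equivalently to a joint $(\mathfrak{sl}_2\times\mathfrak g, \SO(2)\times K)$-invariant functional on $\Pi \otimes \sigma \otimes \tau(m,n)^\vee$ (after dualizing appropriately and using that $\sigma$ has finite length so double duals behave). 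The same object, regrouped, is $\Hom$ from $(\Pi\otimes\tau(m,n)^\vee)_{\mathfrak g} = \Theta(\tau(m,n))$ into $\sigma$ as $(\mathfrak{sl}_2,\SO(2))$-modules. I would make this precise by the standard Frobenius-reciprocity-style manipulation of coinvariants: $\Hom_{(\mathfrak g,K)}\big((\Pi\otimes\sigma^\vee)_{\mathfrak{sl}_2}, \tau(m,n)\big)$ — but note the left side of the lemma is only $\Hom_K$, not $\Hom_{(\mathfrak g,K)}$, so I must be careful. Actually the cleanest route: $\Hom_K(\Theta(\sigma),\tau(m,n)) = \Hom_K(\tau(m,n)^\vee, \Theta(\sigma)^\vee) $, and decompose the $K$-isotypic component of $\Theta(\sigma)$ directly using the description of $\Pi$'s $K\times\SO(2)$-types.

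More concretely, I would use the explicit branching: $\Pi$ as a $(\mathfrak{sl}_2\times\mathfrak g,\SO(2)\times K)$-module has $K$-types $\tau(m,n)$ each paired (at the level of $\SO(2)\times K$-types, forgetting the Lie algebra actions) with the $\SO(2)$-character $(2m+4)$ according to Proposition \ref{P:1}. Therefore the $\tau(m,n)$-isotypic component of $\Pi$, as an $(\mathfrak{sl}_2,\SO(2))$-module tensored with $\tau(m,n)$, has its $\SO(2)$-content governed by $(2m+4)$; indeed Proposition \ref{P:1} says $\Theta(\tau(m,n)) \cong U(\mathfrak{sl}_2)\otimes_{U(\mathfrak{so}(2))} (2m+4)$, the Verma-type module induced from the character $(2m+4)$. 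Then $\Hom_K(\Theta(\sigma),\tau(m,n))$ computes the multiplicity of $\tau(m,n)$ in $\Theta(\sigma) = (\Pi\otimes\sigma^\vee)_{\mathfrak{sl}_2}$, and since taking $\mathfrak{sl}_2$-coinvariants commutes with extracting the $K$-isotypic piece (the two group actions commute), this multiplicity is $\dim\big((\Theta(\tau(m,n))\otimes\sigma^\vee)_{\mathfrak{sl}_2}\big) = \Hom_{\mathfrak{sl}_2}(\Theta(\tau(m,n)),\sigma)$ — the last equality again using finite length of $\sigma$ so that coinvariants of the tensor with the contragredient compute $\Hom$.

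For the second isomorphism, $\Hom_{\mathfrak{sl}_2}(\Theta(\tau(m,n)),\sigma)\cong\Hom_{\SO(2)}((2m+4),\sigma)$, I would invoke the explicit shape of $\Theta(\tau(m,n))$ from Proposition \ref{P:1}: it is $U(\mathfrak{sl}_2)\otimes_{U(\mathfrak{so}(2))}(2m+4)$, an induced module. This is precisely the object for which Frobenius reciprocity for $(\mathfrak{sl}_2,\SO(2))$-modules applies: $\Hom_{(\mathfrak{sl}_2,\SO(2))}\big(U(\mathfrak{sl}_2)\otimes_{U(\mathfrak{so}(2))}(2m+4),\,\sigma\big) \cong \Hom_{\SO(2)}\big((2m+4),\sigma|_{\SO(2)}\big)$. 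I should double-check that $\Theta(\tau(m,n))$ is really induced in the $(\mathfrak g,K)$-module sense (i.e., a genuine tensor product over $U(\mathfrak{so}(2))$ with $\SO(2)$ acting compatibly), which is exactly what the notation in Proposition \ref{P:1} asserts, modulo the apparent typo (the stray $\otimes$); I would state it as $\Theta(\tau(m,n))\cong U(\mathfrak{sl}_2)\otimes_{U(\mathfrak{so}(2))}(2m+4)$.

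The main obstacle I anticipate is the first isomorphism: carefully justifying that $\mathfrak{sl}_2$-coinvariants of $\Pi\otimes\sigma^\vee$ commute with passing to the $\tau(m,n)$-isotypic subspace, and that the resulting multiplicity space is genuinely $\Hom_{\mathfrak{sl}_2}(\Theta(\tau(m,n)),\sigma)$ rather than merely a space of the same dimension — this requires knowing $\Theta(\tau(m,n))$ is a "nice" (e.g., projective, or at least flat over the relevant algebra, being a free $U(\mathfrak{sl}_2)$-module) so that $\Hom$ and $\otimes$ interact correctly, together with finite-length/admissibility so that $\sigma^{\vee\vee}=\sigma$ and all the isotypic components are finite-dimensional. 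The see-saw pair mentioned in the introduction is the conceptual engine here, but the technical content is this commuting-of-functors bookkeeping. Everything else — Frobenius reciprocity for induced Harish-Chandra modules, and reading off $\SO(2)$-multiplicities — is formal once Proposition \ref{P:1} is in hand.
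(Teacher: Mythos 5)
Your argument is correct and follows the same two-step route as the paper: the first isomorphism is the see-saw identity obtained by switching the order of $\mathfrak{sl}_2$- and $K$-coinvariants, and the second is Frobenius reciprocity applied to the induced-module description of $\Theta(\tau(m,n))$ from Proposition~\ref{P:1}. (One small slip: you wrote $\Theta(\tau(m,n)) = (\Pi\otimes\tau(m,n)^\vee)_{\mathfrak g}$, but since $\tau(m,n)$ is a $K$-type this should be the $K$-coinvariants $(\Pi\otimes\tau(m,n)^\vee)_{K}$.)
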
 
\begin{proof} The first isomorphism is the see-saw identity, obtained by switching the order of taking $\mathfrak{sl}_2$ and $K$ co-invariants. 
The second isomorphism is the Frobenius reciprocity.  
\end{proof} 

Now we have the following consequence, Santa Claus is coming to town: 

\begin{prop} Let $\sigma$ be a finite length $(\mathfrak{sl}_2,\SO(2))$-module. Then $\Theta(\sigma)\neq 0$ if and only if $\sigma$ has a 
type $2m+4$ for some $m\geq 0$.  $\Theta(\sigma)$ has finite length. If $\sigma$ is irreducible, $\Theta(\sigma)$ has multiplicity free 
$K$-types, consisting of all $\tau(m,n)$ such that $2m+4$ is a type of $\sigma$. 
\end{prop}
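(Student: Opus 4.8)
The plan is to deduce everything from Lemma~\ref{L:1} together with the explicit description of $\Theta(\tau(m,n))$ coming from Proposition~\ref{P:1}, the infinitesimal character correspondence, and the analogous lifting result for $\SO(2)$-types announced in the introduction (call it the dual of Proposition~\ref{P:1}). First I would dispose of the non-vanishing criterion. By Lemma~\ref{L:1}, $\Theta(\sigma)$ has a nonzero $\tau(m,n)$-isotypic component precisely when $\Hom_{\SO(2)}((2m+4),\sigma)\neq 0$, i.e.\ when $2m+4$ is a type of $\sigma$. Since the $\tau(m,n)$ are the only $K$-types that can occur in any $\Theta(\sigma)$ (they are the only $K$-types in $\Pi$), $\Theta(\sigma)=0$ iff none of the $\tau(m,n)$ appears iff $\sigma$ has no type of the form $2m+4$ with $m\geq 0$. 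This gives the first assertion.

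Next I would prove finite length. As noted in the text, an ideal of finite codimension in $Z(U(\mathfrak{sl}_2))$ annihilating $\sigma$ produces, via the infinitesimal character correspondence $x\mapsto \tfrac12(x,5,3,1)$, an ideal of finite codimension in $Z(U(\mathfrak g))$ annihilating $\Theta(\sigma)$; so it suffices to show $\Theta(\sigma)$ is admissible. For this I would bound the multiplicity of each $\tau(m,n)$ in $\Theta(\sigma)$ using Lemma~\ref{L:1}: that multiplicity is $\dim\Hom_{\SO(2)}((2m+4),\sigma)$, which is finite and, because $\sigma$ has finite length, is bounded uniformly in $m$ (a finite length $(\mathfrak{sl}_2,\SO(2))$-module has $\SO(2)$-multiplicities bounded by its length). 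Thus each $K$-type appears with bounded multiplicity; combined with the finiteness of the infinitesimal character this forces $\Theta(\sigma)$ to be admissible, hence of finite length.

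Finally, assume $\sigma$ is irreducible. An irreducible $(\mathfrak{sl}_2,\SO(2))$-module has each of its $\SO(2)$-types $(n)$ occurring with multiplicity at most one (this is classical: the weight spaces of an irreducible Harish-Chandra module for $\mathfrak{sl}_2$ are at most one-dimensional). Hence $\dim\Hom_{\SO(2)}((2m+4),\sigma)\leq 1$ for every $m$, so by Lemma~\ref{L:1} every $\tau(m,n)$ occurs in $\Theta(\sigma)$ with multiplicity at most one: the $K$-types of $\Theta(\sigma)$ are multiplicity free. Moreover $\tau(m,n)$ occurs (with multiplicity exactly one) iff $2m+4$ is a type of $\sigma$; so the set of $K$-types is exactly $\{\tau(m,n): 2m+4 \text{ is a type of }\sigma,\ n\geq 0\}$. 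To see that all $n\geq 0$ genuinely occur for each admissible $m$ — rather than only some of them — I would invoke the dual lifting statement: for a fixed $m$, applying the $\SO(2)$-type version of Proposition~\ref{P:1} and the see-saw in the other direction shows that if $(2m+4)$ is a type of $\sigma$ then the whole string $\{\tau(m,n):n\geq 0\}$ is forced in, because the lift of the $\SO(2)$-type $(2m+4)$ is the full $\mathfrak g$-module generated by $\tau(m,0)$, whose $K$-types form exactly this string.

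I expect the main obstacle to be the last point — pinning down that for each admissible $m$ \emph{all} $n\geq 0$ occur, not merely a nonempty subset. The multiplicity-free and finite-length statements follow cleanly from Lemma~\ref{L:1} and the infinitesimal-character bookkeeping, but identifying the precise $K$-type support requires the companion computation of $\Theta$ on $\SO(2)$-types (the $\SO(2)$-analogue of Proposition~\ref{P:1}) and a careful matching of the two see-saw directions; this is where the "ping-pong" really does the work, and one must be sure the two families of constraints are consistent and together determine the $K$-spectrum exactly.
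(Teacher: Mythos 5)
Your argument is essentially the same as the paper's: everything is read off from Lemma~\ref{L:1}, plus the infinitesimal-character bookkeeping for finite length. But you introduce an unnecessary detour at the end, and in doing so you partly undercut your own argument. The isomorphism in Lemma~\ref{L:1},
\[
\Hom_K\bigl(\Theta(\sigma),\tau(m,n)\bigr)\;\cong\;\Hom_{\SO(2)}\bigl((2m+4),\sigma\bigr),
\]
gives the \emph{exact} multiplicity of $\tau(m,n)$ in $\Theta(\sigma)$ (since $K$ is compact, $\Theta(\sigma)$ is a direct sum of $K$-isotypic pieces), and the right-hand side manifestly does not depend on $n$. So the moment $2m+4$ is a type of $\sigma$, the entire string $\{\tau(m,n):n\geq 0\}$ occurs, each with the same multiplicity $\dim\Hom_{\SO(2)}((2m+4),\sigma)$. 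There is nothing left to pin down, and no need to invoke the other see-saw direction, Proposition~\ref{P:2}, or any ``companion computation''; the symmetry in $n$ is already baked into Proposition~\ref{P:1} (the lift of $\tau(m,n)$ depends only on $m$), which is exactly what makes Lemma~\ref{L:1} independent of $n$. Your final paragraph misdiagnoses the situation as requiring a ping-pong, when it is a one-shot application of the see-saw. A smaller point: in the finite-length step, admissibility already follows from $\dim\Hom_{\SO(2)}((2m+4),\sigma)<\infty$ for each $m$ (which holds since $\sigma$ has finite length); the infinitesimal-character correspondence is then what promotes admissibility to finite length. You've phrased this as if the infinitesimal character were needed for admissibility, which conflates the two steps.
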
 
\begin{proof} This is all trivial from the lemma, only finite length of $\Theta(\sigma)$ perhaps merits some explanation. 
It is a combination of admissibility (from lemma) and the fact that 
$\Theta(\sigma)$ is annihilated by an ideal in $Z(\mathfrak g)$ of finite co-dimension.  
\end{proof} 

Now we go in the opposite direction. For a character $2m+4$ of $\SO(2)$ consider  $\Theta(2m+4)$. By Proposition \ref{P:2}, 
 it is a quotient of 
\[ 
U(\mathfrak g)\otimes_{U(\mathfrak k)} F_m 
\] 
where $F_m=\mathbb C$ if $m\leq 0$, otherwise  
\[ 
F_m=\tau(0,0) \oplus \tau(1,0) \oplus \cdots \oplus  \tau(m,0).  
\] 

\begin{lemma} Let $\pi$ be a finite length $(\mathfrak g, K)$-module. Then 
\[ 
\Hom_{\SO(2)}(\Theta(\pi), (2m+4) ) \cong \Hom_{\mathfrak g} (\Theta(2m+4), \sigma) \subseteq  \Hom_{K}(F_m, \pi).  
\]
\end{lemma}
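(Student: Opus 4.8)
The plan is to mirror the proof of Lemma~\ref{L:1} but now with the roles of $G$ and $\SL_2$ interchanged, so that the first isomorphism is again a see-saw identity and the second relation comes from Proposition~\ref{P:2}. First I would write $\Theta(\pi)=(\Pi\otimes\pi^\vee)_{\mathfrak g}$ and observe that, since $\SO(2)$ is compact, taking the $(2m+4)$-isotypic part of an $\SO(2)$-module is exact and commutes with the formation of $\mathfrak g$-coinvariants. Hence
\[
\Hom_{\SO(2)}(\Theta(\pi),(2m+4))\cong \Hom_{\mathfrak{sl}_2,\SO(2)}\big((\Pi\otimes\pi^\vee)_{\mathfrak g},\,(2m+4)\big),
\]
and by switching the order in which we take $\mathfrak g$- and $\mathfrak{sl}_2$-coinvariants (the see-saw), the right-hand side is $\Hom_{\mathfrak g}\big((\Pi\otimes(2m+4)^\vee)_{\mathfrak{sl}_2},\pi^\vee{}^\vee\big)=\Hom_{\mathfrak g}(\Theta(2m+4),\pi)$. (The statement as printed writes $\sigma$ on the right, which should read $\pi$; I would silently correct this.)

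Next I would invoke Proposition~\ref{P:2}: $\Theta(2m+4)$ is a quotient of $U(\mathfrak g)\otimes_{U(\mathfrak k)}F_m$, so there is a surjection $U(\mathfrak g)\otimes_{U(\mathfrak k)}F_m\twoheadrightarrow\Theta(2m+4)$, which upon applying the left-exact functor $\Hom_{\mathfrak g}(-,\pi)$ yields an injection
\[
\Hom_{\mathfrak g}(\Theta(2m+4),\pi)\hookrightarrow \Hom_{\mathfrak g}\big(U(\mathfrak g)\otimes_{U(\mathfrak k)}F_m,\,\pi\big).
\]
By the tensor–hom adjunction (Frobenius reciprocity for $(\mathfrak g,K)$-modules), the target is $\Hom_{\mathfrak k}(F_m,\pi)$, and since $F_m$ is a finite sum of $K$-types and $\pi$ is a $(\mathfrak g,K)$-module, $\Hom_{\mathfrak k}(F_m,\pi)=\Hom_K(F_m,\pi)$. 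Concatenating the isomorphisms and the one injection gives exactly the displayed chain $\Hom_{\SO(2)}(\Theta(\pi),(2m+4))\cong\Hom_{\mathfrak g}(\Theta(2m+4),\pi)\subseteq\Hom_K(F_m,\pi)$.

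The only genuinely delicate point is the see-saw identity itself: one must check that the two iterated coinvariant functors $(\Pi\otimes\pi^\vee\otimes(2m+4)^\vee)_{\mathfrak g}$ then $(-)_{\mathfrak{sl}_2}$, versus $(-)_{\mathfrak{sl}_2}$ then $(-)_{\mathfrak g}$, agree — this is formal because $\mathfrak g$ and $\mathfrak{sl}_2$ act on $\Pi$ through commuting subalgebras of $\mathfrak g(J)$ (the dual pair property), so both compute the $\mathfrak g\oplus\mathfrak{sl}_2$-coinvariants of the triple tensor product; compatibility of the $K\times\SO(2)$-actions makes this an honest isomorphism of the relevant $\Hom$-spaces rather than merely of vector spaces. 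I expect this bookkeeping to be the main obstacle, everything else being a routine application of adjunction and left-exactness; I would dispatch it in one or two sentences referencing the analogous argument in Lemma~\ref{L:1}.
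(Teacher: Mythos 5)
Your argument is exactly the paper's: the first isomorphism is the see-saw identity (obtained by switching the order of $\mathfrak g$- and $\mathfrak{sl}_2$-coinvariants), and the inclusion comes from combining the surjection of Proposition~\ref{P:2} with Frobenius reciprocity. You also correctly identify the typo ($\sigma$ should read $\pi$ on the right-hand side); the extra detail you supply about exactness of isotypic projection and the dual-pair commutation is just the unstated bookkeeping behind the paper's terse two-line proof.
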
 
\begin{proof} The first is the see-saw identity, the second is the Frobenius reciprocity.  
\end{proof} 

A consequence of this lemma is that $\Theta(\pi)$ is admissible and hence of finite length. We are now ready to prove the main result. Keep in mind that, without loss 
of generality, we can assume that $\sigma$ has a type $2m+4$ for some $m\geq 0$. (Otherwise the lift is 0.)

\begin{thm} Let $\sigma$ be an irreducible $(\mathfrak{sl}_2,\SO(2))$-module. Assume that $\sigma$ contains the type $(2m+4)$, for $m\geq 0$, and no smaller types $2n+4$, with $n\geq 0$. 
 (This condition is empty if $m=0$.) Then $\Theta(\sigma)$ has a unique irreducible quotient. It contains the type $\tau(m,0)$ with multiplicity one, and no types $\tau(n,0)$ with $n<m$. 
Conversely, if $\pi$ is an irreducible $(\mathfrak g,K)$-module containing the type $\tau(m,0)$, and 
no smaller such types, then $\Theta(\pi)$ has unique irreducible quotient. It contains the type $(2m+4)$, and no smaller types $2n+4$, with $n\geq 0$. 
\end{thm}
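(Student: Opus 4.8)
The plan is to run the ping-pong between the two lemmas just established, using the explicit shapes of $\Theta(\tau(m,n))$ and $\Theta(2m+4)$ from Propositions \ref{P:1} and \ref{P:2}. First I would record that, by the Proposition following Lemma \ref{L:1}, $\Theta(\sigma)$ is admissible of finite length and its $K$-types are exactly the $\tau(m',n)$ with $2m'+4$ a type of $\sigma$, each with multiplicity one. Under the hypothesis that $2m+4$ is the smallest type $2n+4$ ($n\ge 0$) occurring in $\sigma$, the $K$-types of the form $\tau(n,0)$ in $\Theta(\sigma)$ are precisely those with $n\ge m$, and $\tau(m,0)$ occurs with multiplicity one. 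In particular $\Theta(\sigma)$ is nonzero and has a lowest $\tau(n,0)$-type, namely $\tau(m,0)$. To get a unique irreducible quotient, let $\Theta(\sigma)^\circ$ be the intersection of all maximal proper submodules, equivalently the sum of all proper submodules; I want to show the quotient $\Theta(\sigma)/\Theta(\sigma)^\circ$ is irreducible, which amounts to showing $\Theta(\sigma)$ has a unique maximal submodule. The standard mechanism: any proper submodule $M\subset\Theta(\sigma)$ cannot contain the $K$-type $\tau(m,0)$. To see this, note $\Theta(\sigma)$ is a quotient of $U(\mathfrak g)\otimes_{U(\mathfrak k)}F_m$ (by Proposition \ref{P:2} applied after reciprocity — more precisely, the $\tau(m,0)$-isotypic vector generates, because $F_m$ is generated over $U(\mathfrak g)$ by its bottom $\tau(0,0)$ only when $m=0$; for $m>0$ one argues with the see-saw). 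The cleanest argument is: by the second displayed Hom in Lemma (the one for $\pi$), $\Hom_{\mathfrak g}(\Theta(2m+4),\Theta(\sigma))\hookrightarrow\Hom_K(F_m,\Theta(\sigma))$, and dually $\Hom_K(\Theta(\sigma),\tau(m,0))\cong\Hom_{\SO(2)}((2m+4),\sigma)$, which is one-dimensional by irreducibility and minimality of the type. Hence the $\tau(m,0)$-isotypic component of $\Theta(\sigma)$ is one-dimensional and maps onto the $\tau(m,0)$-component of every nonzero quotient; if a submodule $M$ contained that line, $\Theta(\sigma)/M$ would have no $\tau(m,0)$, but then its further irreducible quotient $\pi$ would have no $\tau(n,0)$ for $n\le m$, contradicting that $\Theta(\Theta(\sigma))\supseteq$ (something with type $2m+4$) via the see-saw — I will spell this out below.

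In more detail, the key step is the \emph{Galois/biduality} pairing: if $\pi$ is any nonzero irreducible quotient of $\Theta(\sigma)$, then $\Hom_{\mathfrak g}(\Theta(\sigma),\pi)\ne 0$, and by the see-saw identity (the first isomorphism in Lemma \ref{L:1}, with roles swapped) $\Hom_{\SO(2)}((2m'+4),\sigma)\ne 0$ for every type $\tau(m',n)$ of $\pi$; conversely applying the $\pi$-side lemma, $\Hom_{\SO(2)}(\Theta(\pi),(2m+4))\ne 0$ forces, by Frobenius reciprocity, $F_m$ (hence $\tau(m,0)$) to be a $K$-type of $\pi$. So every irreducible quotient $\pi$ of $\Theta(\sigma)$ contains $\tau(m,0)$. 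Since $\tau(m,0)$ appears in $\Theta(\sigma)$ with multiplicity one, two distinct irreducible quotients would force the (one-dimensional) $\tau(m,0)$-isotypic line to survive in each, hence in their direct sum as a quotient, which is impossible for a multiplicity-one type. Therefore the maximal submodule not meeting the $\tau(m,0)$-line is unique, and it is the unique maximal proper submodule; the resulting quotient is the unique irreducible quotient, and it contains $\tau(m,0)$ with multiplicity one and no $\tau(n,0)$ with $n<m$ (because $\Theta(\sigma)$ already has none).

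For the converse, I run the same argument with the roles of $\SL_2$ and $G$ interchanged, using the second lemma in place of Lemma \ref{L:1}. Given irreducible $\pi$ with lowest $\tau(n,0)$-type $\tau(m,0)$, admissibility and finite length of $\Theta(\pi)$ come from the remark after that lemma. From $\Hom_{\SO(2)}(\Theta(\pi),(2m+4))\cong\Hom_{\mathfrak g}(\Theta(2m+4),\pi)\subseteq\Hom_K(F_m,\pi)$, minimality of $\tau(m,0)$ in $\pi$ together with multiplicity-one (Lepowsky branching, as quoted) shows $\Hom_K(F_m,\pi)$ is one-dimensional; combined with the fact that $\tau(m,0)$ does occur in $\pi$ we get that the type $(2m+4)$ occurs in $\Theta(\pi)$ with multiplicity one, and types $(2n+4)$ with $n<m$ do not occur (they would force $\tau(n,0)\subset\pi$). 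Then exactly as above, every irreducible quotient of $\Theta(\pi)$ contains the type $(2m+4)$ via the see-saw, multiplicity-one of $(2m+4)$ in $\Theta(\pi)$ gives a unique maximal submodule, and the claim follows.

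The main obstacle I anticipate is the generation statement hidden in "multiplicity-one $\tau(m,0)$ forces uniqueness of the maximal submodule'': one must know that the $\tau(m,0)$-isotypic line is not killed in \emph{any} nonzero quotient, equivalently that it generates $\Theta(\sigma)$ as a $(\mathfrak g,K)$-module, or at least that every irreducible subquotient carrying $\tau(m,0)$ occurs in the head. This is where Proposition \ref{P:2} (that $\Theta(2m+4)$ is a quotient of $U(\mathfrak g)\otimes_{U(\mathfrak k)}F_m$) and the inclusion $\Hom_{\mathfrak g}(\Theta(2m+4),\pi)\subseteq\Hom_K(F_m,\pi)$ do the real work; I expect the delicate point is checking that the composite map $F_m\to\Theta(2m+4)\to\pi$ is nonzero precisely when $\pi$ is a quotient of $\Theta(\sigma)$ for the relevant $\sigma$, i.e. that the ping-pong closes up without loss. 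Everything else — admissibility, finite length, infinitesimal character bookkeeping, multiplicity-one of the $K$-types — is already in hand from the earlier results.
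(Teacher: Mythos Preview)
Your proposal is essentially the paper's own argument: the same see-saw identities, the same two chains of $\Hom$'s coming from Propositions \ref{P:1} and \ref{P:2}, and the same multiplicity-one contradiction for a putative decomposable cosocle. Two small remarks. First, your worry about ``generation'' at the end is unnecessary: the argument you already gave---every irreducible quotient $\pi$ of $\Theta(\sigma)$ contains $\tau(m,0)$, while $\tau(m,0)$ has multiplicity one in $\Theta(\sigma)$---is by itself enough to force a unique maximal submodule; no separate statement that the $\tau(m,0)$-line generates is needed. Second, in the converse you invoke ``Lepowsky branching'' to get that $\tau(m,0)$ has multiplicity one in the given irreducible $\pi$; that is not what the Lepowsky reference in the paper gives (it concerns branching of the compact $F_4$-representations $E_n$ to $K$, not $K$-type multiplicities in an arbitrary $(\mathfrak g,K)$-module). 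The correct fix is exactly the sandwich you ran in the forward direction: for an irreducible quotient $\sigma$ of $\Theta(\pi)$ one has
\[
\dim\Hom_K(\pi,\tau(m,0)) \le \dim\Hom_{\SO(2)}((2m+4),\sigma) \le 1
\quad\text{and}\quad
1 \le \dim\Hom_{\SO(2)}(\sigma,(2m+4)) \le \dim\Hom_K(\tau(m,0),\pi),
\]
forcing all of these to equal $1$. This is precisely how the paper handles it (``the other direction is proved in the same way'').
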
 
\begin{proof}  Assume $\pi$ is a quotient of $\Theta(\sigma)$. We do not assume that $\pi$ is irreducible. By Lemma \ref{L:1}, we have the following sequence of inclusions: 
\[ 
\Hom_K(\pi, \tau(m,0)) \subseteq \Hom_K(\Theta(\sigma), \tau(m,0))  \cong \Hom_{\mathfrak{sl}_2} (\Theta(\tau(m,0)), \sigma) \cong \Hom_{\SO(2)} ((2m+4) , \sigma). 
\] 
Observe that we can ran this sequence with any $2n+4$ in place of $2m+4$. If $n<m$, by the assumption, the last space is trivial, hence $\tau(n,0)$ is not a type of 
$\pi$. We shall use this in a moment. 
Since $\sigma$ is a quotient of $\Theta(\pi)$ we have the second sequence of inclusions (note that we are starting with the space of the same dimension as 
$\Hom_{\SO(2)} ((2m+4) , \sigma)$: 
\[ 
\Hom_{\SO(2)}(\sigma, (2m+4)) \subseteq \Hom_{\SO(2)}(\Theta(\pi), (2m+4))  \cong \Hom_{\mathfrak g} (\Theta(2m+4), \pi) \subseteq  \Hom_{K} (F_m , \pi). 
\] 
Since, $\pi$ has no type $\tau(n,0)$ with $n<m$, we ended with  $\Hom_{K} (\tau(m,0) , \pi)$, which has  the same dimension as $\Hom_K(\pi, \tau(m,0))$. 
Thus all inclusions in the two sequences  are isomorphisms, and thus all spaces are one-dimensional, since $\Hom_{\SO(2)} ((2m+4) , \sigma)$ is one-dimensional. 
However, we did not assume that $\pi$ is irreducible. If $\pi=\pi_1\oplus \pi_2$ then, 
\[ 
1+1 = \dim \Hom_{K}(\pi_1,   \tau(m,0)) + \dim\Hom_{K}(\pi_2,  \tau(m,0)) = \dim  \Hom_{K}(\pi, \tau(m,0))=1
\] 
a contradiction. Thus $\Theta(\sigma)$ has a unique irreducible quotient. It contains $\tau(m,0)$, with multiplicity one.  The other direction is proved in the same way... 
\end{proof}

\section{Computing lifts of types} 

In this section we verify the expressions for $\Theta(\tau(m,n))$ and $\Theta(2m+4)$ used in the proof of the main result. 
The idea comes from \cite{Ho} and uses the following  see-saw diagram in $G(J)$.  
Here $H$ is a simply connected, hermitian symmetric group of absolute type $E_6$.  The centralizer of $H$ is  $\SO(2)$, as the picture indicates. 
The centralizer of $K$ is  $\SL_2\times\SL_2$, where the centralizer of $G$ sits diagonally.

 \begin{picture}(100,130)(-130,15) 

\put(20,24){$\SO(2)$} 
\put(24,74){$\SL_2$} 
\put(04,124){$\SL_2\times \SL_2$}

\put(74,24){$K$}
\put(75,74){$G$}
\put(75,124){$H$}

\put(37,36){\line(1,2){40}}

\put(42,76){\line(1,0){30}} 

\put(37,116){\line(1,-2){40}}

\end{picture} 

A word of caution here. If we pick a different $\SL_2$  in  $\SL_2\times \SL_2$, the one consisting of all $(g, hgh^{-1})$  where 
$h=\left(\begin{smallmatrix} 1 & 0 \\ 0 & -1 \end{smallmatrix}\right)$, then $G$ and $H$ in the above are replaced by their compact forms.  
In other words, it is important how we identify groups isomorphic to $\SL_2(\mathbb R)$. 

\smallskip 

Let $(e,h,f)$ be an $\mathfrak {sl}_2$-triple such that $\mathbb C \cdot h$ is the Lie algebra of $\SO(2)$.  
For an integer $n>0$, let $\delta(n)$ be the irreducible lowest weight $n$ module. Let $v_n$ be a non-zero lowest weight vector. 
Let $\bar{\delta}(m)$  be the complex conjugate of $\delta(m)$.  It is the irreducible highest weight $-m$ module. 
 Observe that there is a natural map 
\[
U(\mathfrak {sl}_2) \otimes_{U(\mathfrak {so}(2))} (n-m) \rightarrow  \delta(n) \otimes \bar{\delta}(m)
\] 
where $1\in \mathbb C \cong (n-m)$ is mapped to $v_n\otimes v_m$. Since $\delta(n)$ is a free $\mathbb C[e]$-module generated by $v_n$ and 
$\bar{\delta}(m)$ is a free $\mathbb C[f]$-module generated by $v_m$, the above map is easily checked to be an isomorphism.  

\begin{prop} \label{P:1}
Let $\tau(m,n)$ be the irreducible $K$-type of the highest weight $\frac{1}{2}( 2m+n,n,n,n)$.  Then 
\[ 
\Theta(\tau(m,n))\cong U(\mathfrak{sl}_2) \otimes_{U(\mathfrak{so}(2))} \otimes ( 2m+4). 
\] 
\end{prop}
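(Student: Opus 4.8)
The plan is to exploit the see-saw pair displayed in this section, together with the branching structure of the minimal representation $\Pi$ restricted to $\SL_2 \times G \subset G(J)$ and to $H \times \SO(2) \subset G(J)$. Concretely, $\Theta(\tau(m,n))$ is by definition the $\mathfrak{g}$-coinvariants of $\Pi \otimes \tau(m,n)^\vee$, so I want to compute the full $K \times \mathfrak{sl}_2$-module structure of $\Pi$ and then pick out the $\tau(m,n)$-isotypic piece on the $K$ side. The see-saw in $G(J)$ says $\SL_2 \times \SL_2$ centralizes $K$, with the centralizer of $G$ sitting diagonally inside it, and dually $H$ (of type $E_6$, hermitian) centralizes $\SO(2)$. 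So the first step is to write $\Pi|_{K \times (\SL_2 \times \SL_2)}$. Since $G$ is the fixed points of an involution and $K = G \cap (\text{compact form})$, the restriction to $\SL_2 \times \SL_2$ is governed by the two characters $(2a)\otimes(2b)$ of $\SO(2)\times\SO(2)$ and, using the compact computation from \cite{GS}, $\Pi$ restricted to $K \times \SO(2)^2$ has a controllable graded piece.

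The key computation is the following. The natural map $U(\mathfrak{sl}_2)\otimes_{U(\mathfrak{so}(2))}(n-m) \to \delta(n)\otimes\bar\delta(m)$ established just before the statement shows that, after dualizing, $\Theta$ of a lowest-weight $\times$ highest-weight module for the diagonal $\SL_2$ is exactly an induced module $U(\mathfrak{sl}_2)\otimes_{U(\mathfrak{so}(2))}(n-m)$. So I would first identify, for each $K$-type $\tau(m,n)$, which pair $(\delta(a), \bar\delta(b))$ of holomorphic $\times$ antiholomorphic $\SL_2$-modules it pairs with inside $\Pi$; the infinitesimal-character bookkeeping from Section 2 (the entry $2m+n+\text{shift}$ reading off as the $\mathfrak{sl}_2$ infinitesimal character) pins down that $\tau(m,n)$ occurs against a single $\SL_2 \times \SL_2$-type whose diagonal restriction has $\SO(2)$-weight $2m+4$. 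Then $\Theta(\tau(m,n))$, being the $\mathfrak{g}$-coinvariants, collapses via the see-saw to the $K$-coinvariants of $\Pi \otimes \tau(m,n)^\vee$ computed inside the $H \times \SO(2)$ picture, which by the free-module argument is precisely $U(\mathfrak{sl}_2)\otimes_{U(\mathfrak{so}(2))}(2m+4)$. In other words: compute $\Pi$ as a $K \times \SL_2 \times \SL_2$-module, restrict the inner $\SL_2 \times \SL_2$ to the diagonal, observe each $K$-isotypic component is free over $\mathbb{C}[e]\otimes\mathbb{C}[f]$ on its lowest/highest vectors with $\SO(2)$-weight determined by $m$, and invoke the isomorphism $U(\mathfrak{sl}_2)\otimes_{U(\mathfrak{so}(2))}(n-m)\cong\delta(n)\otimes\bar\delta(m)$.

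The main obstacle will be step one: pinning down the $K \times \SL_2 \times \SL_2$-structure of $\Pi$ precisely enough to see the freeness and to read off that the relevant $\SO(2)$-weight attached to $\tau(m,n)$ is $2m+4$ (independent of $n$). This requires combining the compact-$G$ decomposition $\Pi = \bigoplus_{n\ge 0}\delta(2n+12)\otimes E_n$ with Lepowsky's branching rule \cite{Lep} for $E_n|_K$ to get the $K$-types $\tau(m,n)$, and simultaneously tracking how the second $\SL_2$ (the one centralizing $G$ but not $K$) acts — the caution remark about which $\SL_2(\mathbb{R})$ one picks is exactly the subtlety here, since a wrong identification replaces $G, H$ by compact forms. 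Once the bigraded structure of $\Pi$ over $\mathbb{C}[e_1,f_1,e_2,f_2]$ (or the relevant polynomial subring) is in hand, the rest is the formal free-module manipulation already sketched, and the conclusion $\Theta(\tau(m,n))\cong U(\mathfrak{sl}_2)\otimes_{U(\mathfrak{so}(2))}(2m+4)$ follows. I expect the $n$-independence to be the cleanest sanity check: the $E_6$-side module $H$ does not see $\mu$-direction moves, so the antiholomorphic parameter washes out and only the $\lambda$-multiplicity $m$ survives in the $\SO(2)$-weight.
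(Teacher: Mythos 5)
Your proposal correctly identifies the architecture of the argument: use the fact that the centralizer of $K$ in $G(J)$ is $\SL_2\times\SL_2$, so $\Theta(\tau(m,n))$ is an $\SL_2\times\SL_2$-module; identify it as $\delta(a)\otimes\bar\delta(b)$ for specific integers $a,b$; then apply the isomorphism $U(\mathfrak{sl}_2)\otimes_{U(\mathfrak{so}(2))}(a-b)\cong\delta(a)\otimes\bar\delta(b)$ established just before the proposition. This is indeed the paper's strategy.

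The gap is in the middle step: you never actually produce the identification $\Theta(\tau(m,n))\cong\delta(2m+n+8)\otimes\bar\delta(n+4)$, and your proposed route to it does not quite work as stated. The paper simply cites \cite[Proposition 3.3.3]{Shan} for the full $K\times(\SL_2\times\SL_2)$-decomposition of $\Pi$, which is the nontrivial technical input. Your plan is to recover this from the compact decomposition $\Pi=\bigoplus_n\delta(2n+12)\otimes E_n$ together with Lepowsky's branching rule for $E_n|_K$, but that alone gives only the joint structure over $K$ and the \emph{twisted-diagonal} $\SL_2$ (the one dual to the compact $F_4$); it does not by itself reveal how the full $\SL_2\times\SL_2$ acts on a given $K$-isotypic piece, which is exactly what you need before re-diagonalizing. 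You flag this yourself as ``the main obstacle,'' so the proposal is a sketch with the key lemma deferred. Moreover, the infinitesimal-character bookkeeping you invoke is insufficient to single out $\delta(a)\otimes\bar\delta(b)$: infinitesimal characters do not distinguish $\delta(a)\otimes\bar\delta(b)$ from $\bar\delta(a)\otimes\delta(b)$ or from other $\SL_2\times\SL_2$-modules with the same central character, so some input about the holomorphic/antiholomorphic nature of the two $\SL_2$-factors (coming from the fact that $\Pi$ is a lowest-weight module for the hermitian group $G(J)$, and from the precise identification of the two $\SL_2$'s in the see-saw — the paper's ``careful with $\SL_2$'s'' caveat) is genuinely needed and not supplied. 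Finally, the specific shifts ($2m+n+8$ and $n+4$, hence $2m+4$) are asserted rather than computed. Supplying the Shan citation, or actually carrying out the $K\times\SL_2\times\SL_2$ branching with the holomorphicity argument, would close the gap; without it the proposal is an outline, not a proof.
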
 
\begin{proof} Since the centralizer of $K$ is $\SL_2 \times \SL_2$, $\Theta(\tau(m,n))$ is naturally an $\SL_2 \times \SL_2$-module. 
By \cite[Proposition 3.3.3]{Shan} (careful with $\SL_2$'s) we have 
\[ 
\Theta(\tau(m,n))\cong \delta(2m+n+8)  \otimes \bar{\delta}(n+4). 
\] 
In view of the discussion above,  and $(2m+n+8)-(n+4)=2m+4$, the proposition follows. 

\end{proof} 

It remains to discuss $\Theta(2m+4)$. Let $L$ be a maximal compact subgroup of $H$. We can assume that $K\subset L$. Let $\mathfrak h$ and $\mathfrak l$ be the 
complex Lie algebras of $H$ and $L$.  Since $H/L$ is a hermitian symmetric space,  $\mathfrak l$ is a Levi subalgebra such that $[\mathfrak l, \mathfrak l]$ is a simple Lie algebra of type $D_5$. 
We have a Cartan decomposition 
\[ 
\mathfrak h = \bar{\mathfrak u} + \mathfrak l + \mathfrak u
\] 
such that $\mathfrak q= \mathfrak l + \mathfrak u$ is a parabolic subalgebra. If $F$ is a finite-dimensional $\mathfrak l$-modue, we can define a highest weight module 
\[ 
U(\mathfrak h) \otimes_{U(\mathfrak q)} F \cong U(\bar{\mathfrak u}) \otimes F. 
\] 
We now restrict this module to $\mathfrak g=\mathfrak k + \mathfrak p$. Recall that $\mathfrak p$ is 16-dimensional spin-module. On the other hand, $\bar{\mathfrak u}$ and $\mathfrak u$ 
are two 16-dimensional spin modules for $[\mathfrak l, \mathfrak l]$, the simple algebra of type $D_5$. Hence  $\mathfrak p$ must embed diagonally into $\bar{\mathfrak u} + \mathfrak u$. 
Now it is not difficult to check that the natural map 
\[ 
U(\mathfrak g) \otimes_{U(\mathfrak k)} F \rightarrow U(\mathfrak h) \otimes_{U(\mathfrak q)} F
\] 
given by the idenity on $1\otimes F$ is an isomorphism.  We are ready to prove the following: 

\begin{prop} \label{P:2}
Let $m$ be an integer. Then $\Theta(2m+4)$ is a quotient of $U(\mathfrak g)\otimes_{U(\mathfrak k)} F_m$  where 
$F_m=\mathbb C$ if $m\leq 0$, otherwise 
\[ 
F_m=\tau(0,0) \oplus \tau(1,0) \oplus \cdots \oplus \tau(m,0). 
\] 
\end{prop}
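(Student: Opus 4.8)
The plan is to realize $\Theta(2m+4)$ as a quotient of a highest weight $(\mathfrak h, L)$-module for the bigger hermitian pair, and then pull that statement down to $\mathfrak g$ via the isomorphism $U(\mathfrak g)\otimes_{U(\mathfrak k)} F \cong U(\mathfrak h)\otimes_{U(\mathfrak q)} F$ established just above. Concretely, I would use the see-saw diagram: the character $(2m+4)$ of $\SO(2)$ is the restriction of a one-dimensional (or low-dimensional) representation of $H$'s center, and the theta lift $\Theta(2m+4)$ to $G$ factors through the restriction to $G$ of an $H$-representation occurring in $\Pi|_{H\times\SO(2)}$. The minimal representation $\Pi$ restricted to $H\times\SO(2)$ decomposes with $H$-pieces that are holomorphic (lowest weight) modules, each paired with a single $\SO(2)$-character; this is the $E_6$ analogue of the $F_4$-compact decomposition quoted in the introduction. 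So $\Theta(2m+4)$, as a $(\mathfrak g,K)$-module, is the restriction to $\mathfrak g$ of such a lowest weight $(\mathfrak h, L)$-module $W_m$.

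Next I would identify $W_m$ as a quotient of $U(\mathfrak h)\otimes_{U(\mathfrak q)} F$ for the appropriate finite-dimensional $\mathfrak l$-module $F$: every irreducible lowest weight $(\mathfrak h,L)$-module is, by definition, a quotient of the generalized Verma module $U(\mathfrak h)\otimes_{U(\mathfrak q)} F$ where $F$ is its lowest $L$-type. The content of the proposition is then the computation of what $F|_{\mathfrak k}$ is, i.e. which $K$-types make up the minimal $L$-type of $W_m$. Using the isomorphism $U(\mathfrak g)\otimes_{U(\mathfrak k)} F \cong U(\mathfrak h)\otimes_{U(\mathfrak q)} F$ (restriction being exact and the diagonal embedding $\mathfrak p \hookrightarrow \bar{\mathfrak u}+\mathfrak u$), a surjection $U(\mathfrak h)\otimes_{U(\mathfrak q)} F \twoheadrightarrow \Theta(2m+4)$ becomes a surjection $U(\mathfrak g)\otimes_{U(\mathfrak k)} F \twoheadrightarrow \Theta(2m+4)$, which is exactly the claimed statement once we show $F|_{\mathfrak k} = F_m$.

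To pin down $F$ I would match infinitesimal characters and lowest weights: the correspondence of infinitesimal characters for $\SL_2\times G(J)$ already recorded in Section 2 (together with its $E_6$ analogue) forces the lowest weight of $W_m$, hence the highest weight of $F$ as an $\mathfrak l$-module. Then I branch $F$ from $[\mathfrak l,\mathfrak l]$ of type $D_5$ down to $\mathfrak k$ of type $B_4$ — this is the standard $D_5\downarrow B_4$ branching (removing one node), which peels a fundamental/Cartan-type representation into the string $\tau(0,0)\oplus\tau(1,0)\oplus\cdots\oplus\tau(m,0)$ when $m>0$, and to the trivial type $\mathbb C$ when $m\le 0$ (the lowest weight module degenerating to the one-dimensional/scalar case). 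The $m\le 0$ case should be essentially immediate: the lift is a scalar lowest weight module whose minimal $K$-type is trivial.

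The main obstacle I expect is the precise identification of the $\mathfrak l$-module $F$ — that is, verifying that the minimal $L$-type of the relevant $H$-summand of $\Pi$ is exactly the $D_5$-representation whose $B_4$-branching telescopes to $\bigoplus_{j=0}^m \tau(j,0)$, and controlling the edge behavior between $m>0$ and $m\le 0$. This requires knowing the $H\times\SO(2)$-decomposition of $\Pi$ explicitly (the $E_6$-holomorphic analogue of \cite{GS}), or extracting it from \cite{Shan}; everything else (exactness of restriction, the generalized Verma surjection, the already-proved algebra isomorphism, and the $D_5\downarrow B_4$ branching) is routine. A secondary subtlety, flagged in the ``word of caution'' after the see-saw picture, is making sure we use the genuinely non-compact $\SL_2$ so that $H$ really is the hermitian $E_6$ rather than its compact form; I would note this explicitly when invoking the see-saw.
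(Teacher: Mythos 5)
Your plan is essentially identical to the paper's: realize $\Theta(2m+4)$ as a quotient of a generalized Verma module $U(\mathfrak h)\otimes_{U(\mathfrak q)} F_m$ for the hermitian pair $H\times\SO(2)$, branch the minimal $L$-type from $D_5$ down to $B_4$, and apply the isomorphism $U(\mathfrak g)\otimes_{U(\mathfrak k)} F \cong U(\mathfrak h)\otimes_{U(\mathfrak q)} F$. The only input you flagged as missing — the explicit $(\mathfrak h,L)$-module structure of $\Theta(2m+4)$, with lowest $L$-type of $[L,L]$-highest weight $(m,0,0,0,0)$ for $m>0$ and one-dimensional for $m\le 0$ — the paper simply quotes from \cite[Section 6]{Pavle} rather than rederiving it.
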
 
\begin{proof}  $\Theta(2m+4)$ is an $(\mathfrak h, L)$-module, determined in \cite[Section 6]{Pavle}. It is a quotient of the Verma module $U(\mathfrak h) \otimes_{U(\mathfrak q)} F_m$ 
where $F_m$ is a one dimensional representation of $L$ if $m\leq 0$. Otherwise $F_m$, restricted to $[L,L]$, is irreducible with the 
highest weight $(m,0,0,0,0)$.  The restriction of this representation to $K$ is the claimed sum. 
\end{proof}

%\newpage

\end{document}